\def\Z{\mathbb Z}
\def\1{{\bf 1}}
\def\pmod #1{\ ({\rm{mod}}\ #1)}
\theoremstyle{plain}
\newtheorem{theorem}{Theorem}
\newtheorem{lemma}{Lemma}
\theoremstyle{definition}
\newtheorem*{acknowledgment}{Acknowledgments}
\theoremstyle{remark}
\begin{document}

\title[A generalization of Rodriguez-Villegas' supercongruences]{A $q$-Dwork-type generalization of Rodriguez-Villegas' supercongruences}

\begin{abstract}
Guo and Zudilin [Adv. Math. 346 (2019),
329--358] developed an analytical method, called `creative
microscoping', to prove many supercongruences by establishing their
$q$-analogues. In this paper, we apply this method to give a
$q$-Dwork-type generalization of Rodriguez-Villegas' supercongruences,
which was recently conjectured by Guo and Zudilin.
\end{abstract}
\author{He-Xia Ni}
\address{Department of Applied Mathematics, Nanjing Audit University\\Nanjing 211815,
People's Republic of China}
\email{nihexia@yeah.net}

\keywords{congruence; $q$-supercongruences, $q$-Pochhammer symbol, cyclotomic polynomial}
\thanks{The author gratefully acknowledges the support from the Natural Science Foundation of the Higher Education Institutions of Jiangsu Province (20KJB110023) and National Natural Science Foundation of China (12001279).  }
\subjclass[2010]{Primary 11B65; Secondary 05A10, 05A30, 11A07}
\maketitle

\section{Introduction}
\setcounter{theorem}{0}\setcounter{lemma}{0}\setcounter{equation}{0}

Let $p>3$ be a prime and $\big(\frac{\cdot}{p}\big)$ be the Legendre symbol modulo $p$. In 2003, E. Morterson \cite{M03,M04} proved  the following  supercongruences involving hypergeometric functions and Calabi-Yau manifolds,
\begin{align}
\sum_{k=0}^{p-1}\frac{\binom{2k}{k}^2}{16^k}\equiv&\left(\frac{-1}{p}\right)\pmod{p^2},
\label{1mor1} \\[5pt]
\sum_{k=0}^{p-1}\frac{\binom{3k}{k}\binom{2k}{k}}{27^k}\equiv&\left(\frac{p}{3}\right)\pmod{p^2},
\label{mor1} \\[5pt]
\sum_{k=0}^{p-1}\frac{\binom{4k}{2k}\binom{2k}{k}}{64^k}\equiv&\left(\frac{-2}{p}\right)\pmod{p^2},
\label{mor2}  \\[5pt]
\sum_{k=0}^{p-1}\frac{\binom{6k}{3k}\binom{3k}{k}}{432^k}\equiv&\left(\frac{-1}{p}\right)\pmod{p^2},
\label{mor3}
\end{align}
which were first conjectured by F. Rodriguez-Villegas \cite{RV}. In 2014, Z.-H. Sun \cite{ZHSun14} gave an elementary proof of \eqref{1mor1}--\eqref{mor3}
by showing that $$\sum_{k=0}^{p-1}\binom{-x}{k}\binom{x-1}{k}\equiv (-1)^{\langle -x\rangle_p}\pmod{p^2}$$ for any $p$-adic integer $x$,
where $\langle x\rangle_m$ denotes the least nonnegative residue of $x$ modulo $m$. In 2017, J.-C. Liu \cite{JCL} stated that for any $x\in \{1/2,1/3,1/4,1/6\}$ and any positive integer $n$,
$$\sum_{k=0}^{pn-1}\binom{-x}{k}\binom{x-1}{k}\equiv (-1)^{\langle-x\rangle_p}\sum_{k=0}^{n-1}\binom{-x}{k}\binom{x-1}{k}\pmod{p^2}.$$

In recent years, $q$-analogues of  supercongruences were widely investigated, and
a variety of techniques, such as  asymptotic estimate, basic hypergeometric transformation, creative microscoping, $q$-WZ pair and $q$-Zeilberger algorithm etc., were involved.
For example, in  \cite{GZ1}, Guo and Zudilin introduced a new method called creative microscoping,
and used this method to proved several new Ramanujan-type $q$-congruences in a unifed way.
For more related results and the latest progress, the reader is referred to \cite{G2,G3,G1,GS,GZ14,GZ1,GZ2,LW,NP2, WY1,WY2,WY3,Zudilin2}.

In 1969, Dwork \cite{BD} studied a question of continuing analytical solutions $f(z)=\sum_{k=0}^{\infty}A_kz^k$ of linear differential equations via $p$-adic analysis.
 A general strategy was to prove that the truncated sums $f_r(z)=\sum_{k=0}^{p^r-1}A_kz^k,$ where $r=0,1,2,\ldots,$ satisfy the so-called Dwork congruences \cite{MV}
 \begin{align}\label{DWork}
 \frac{f_{r+1}(z)}{f_r(z^p)}\equiv\frac{f_r(z)}{f_{r-1}(z^p)}\pmod{p^r\Z_{p}[[z]]}\quad\text{for $r=1,2,\ldots$}.
 \end{align}
 Moreover, for some $m\geq 2,$ provided the congruences \eqref{DWork} hold modulo a higher power of $p$, such as, $$\frac{f_{r+1}(z)}{f_r(z^p)}\equiv\frac{f_r(z)}{f_{r-1}(z^p)}\pmod{p^{mr}\Z_{p}[[z]]}\quad\text{for $r=1,2,\ldots$}.$$  We refer to this type of congruences  as Dwork-type supercongruences. Recently, Guo and Zudilin \cite{GZ2} proved some Dwork-type supercongruences by establishing their $q$-ananlogues. For example, they proved that,
for any odd positive integer $n>1$  and integer $r\geq1$, modulo $\prod_{j=1}^{r}\Phi_{n^j}(q)^2$,
\begin{align}\label{more4}
\sum_{k=0}^{(n^r-1)/d}\frac{2(q;q^2)_k^2q^{2k}}{(q^2;q^2)_k^2(1+q^{2k})}\equiv\left(\frac{-1}{n}\right)\sum_{k=0}^{(n^{r-1}-1)/d}\frac{2(q^n;q^{2n})_k^2q^{2nk}}{(q^{2n};q^{2n})_k^2(1+q^{2nk})},
\end{align}
where $d=1,2.$  Here and in what follows, the {\it $q$-shifted factorial} \cite{GR} is defined by
$$
(x;q)_n=\begin{cases}
(1-x)(1-xq)\cdots(1-xq^{n-1}) &\text{if }n\geq 1,\\
1 &\text{if }n=0,
\end{cases}
$$
and the {\it $n$-th cyclotomic polynomial} is defined as
$$
\Phi_n(q):=\prod_{\substack{1\leq k\leq n\\ (n,k)=1}}(q-e^{2\pi\sqrt{-1}\cdot\frac{k}{n}}).
$$
Moreover, for polynomials $A_1(q),A_2(q),P(q)\in \Z[q],$ we say that $A_1(q)/A_2(q)\equiv 0\pmod{P(q)}$ if $P(q)$ divides $A_1(q)$ and is relatively prime to $A_2(q).$
More generally, if the numerator of the reduced form of the difference between two rational functions $B(q)$ and $C(q)$ is divisible by  $P(q),$ we say that $B(q)\equiv C(q)\pmod{P(q)}.$

Motivated by Guo and Zudilin's work \cite{GZ2}, we shall give the
following result, which was originally conjectured by Guo and Zudilin \cite[Conjecture 3.13]{GZ2}.

\begin{theorem}\label{Th1}
Let $m$ and $s$ be positive integers with $s < m$. Let $n>1$ be an odd integer with $n \equiv \pm1\pmod{m}$. Then, for $r \geq 2$, modulo $\prod_{j=1}^r \Phi_{n^j}(q)^2$,
\begin{align}\label{DT1}
\sum_{k=0}^{n^r-1}\frac{2(q^s;q^m)_k(q^{m-s};q^m)_kq^{mk}}{(q^m;q^m)_k^2(1+q^{mk})}
\equiv(-1)^{\langle-s/m\rangle_n}\sum_{k=0}^{n^{r-1}-1}\frac{2(q^{sn};q^{mn})_k(q^{mn-sn};q^{mn})_kq^{mnk}}{(q^{mn};q^{mn})_k^2(1+q^{mnk})}.
\end{align}
\end{theorem}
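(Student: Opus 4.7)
\medskip

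\noindent\textbf{Proof sketch.} The strategy is to extend the creative microscoping method of Guo and Zudilin \cite{GZ1,GZ2} to the present setting by introducing an auxiliary parameter $a$. Concretely, I would work with
\[
S_r(q,a):=\sum_{k=0}^{n^r-1}\frac{(aq^s;q^m)_k(q^{m-s}/a;q^m)_k\,q^{mk}(1+a)}{(aq^m;q^m)_k(q^m/a;q^m)_k(1+aq^{mk})},
\]
so that $S_r(q,1)$ is the left-hand side of \eqref{DT1}. The factor $(1+a)/(1+aq^{mk})$ equals $(-a;q^m)_k/(-aq^m;q^m)_k$ and is the natural $a$-deformation of $2/(1+q^{mk})$ that casts the sum as a very-well-poised $q$-hypergeometric series accessible to the creative microscoping framework.

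The plan is to prove by induction on $r\ge 1$ the parametric congruence
\[
S_r(q,a)\equiv (-1)^{\langle-s/m\rangle_n}\,S_{r-1}(q^n,a^n)\pmod{(1-aq^{n^r})(a-q^{n^r})\prod_{j=1}^{r-1}\Phi_{n^j}(q)^2},
\]
where $S_0(q,a):=1$. At $a=1$ the modulus is $(1-q^{n^r})^2\prod_{j=1}^{r-1}\Phi_{n^j}(q)^2$, which is divisible by $\prod_{j=1}^r\Phi_{n^j}(q)^2$, so \eqref{DT1} follows. The base case $r=1$ is essentially the parametric $q$-congruence of \cite{G2} (under $n\equiv 1\pmod m$ one has $\langle -s/m\rangle_n=s(n-1)/m$). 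For $r\ge 2$, the factor $\prod_{j=1}^{r-1}\Phi_{n^j}(q)^2$ in the modulus is inherited from the inductive hypothesis applied with $q\mapsto q^n$, together with the identity $\Phi_{n^j}(q^n)=\Phi_{n^{j+1}}(q)$ for $j\ge 1$ (valid because every prime divisor of $n$ divides $n^j$); the new factor $(1-aq^{n^r})(a-q^{n^r})$ is obtained by verifying the congruence at the two specialisations $a=q^{\pm n^r}$ and applying the Chinese remainder theorem in $\Z[q,a]$.

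The main technical obstacle is this last verification. At $a=q^{\pm n^r}$ the index set $0\le k<n^r$ should be split into $n$ blocks of length $n^{r-1}$ via $k=\ell n^{r-1}+k'$ with $0\le\ell<n$, $0\le k'<n^{r-1}$, and one has to show that the outer $\ell$-sum collapses to a single block reproducing $S_{r-1}(q^n,a^n)$ up to the sign $(-1)^{\langle-s/m\rangle_n}$. Because $(1+a)/(1+aq^{mk})$ is not of standard $q$-Pochhammer type (though it rewrites as a ratio of them), a direct appeal to Watson's transformation or a ${}_6\phi_5$ summation is not available; instead the block cancellation has to be verified by tracking the vanishing patterns of $(aq^s;q^m)_k$ and $(q^{m-s}/a;q^m)_k$ at $a=q^{\pm n^r}$ and exploiting the hypothesis $n\equiv 1\pmod m$ to align them with the block structure. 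Once this combinatorial step is in place, specialising $a=1$ in the parametric congruence yields the theorem.
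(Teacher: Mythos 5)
Your overall strategy---introduce a parameter $a$, verify the resulting congruence at special values of $a$ where both sides can be computed, then let $a\to1$---is indeed the creative-microscoping route the paper takes, but two of your specific choices leave genuine gaps. The first concerns the modulus. The two linear factors $(1-aq^{n^r})(a-q^{n^r})$ degenerate at $a=1$ to $(1-q^{n^r})^2$, and this does yield the factor $\Phi_{n^r}(q)^2$ (whose cyclotomic polynomial is coprime to the denominators of the summands); but for $1\le j\le r-1$ the common denominator of your $a$-deformed sum contains $\Phi_{n^j}(q)$ to large positive powers (already $(aq^{mn^j};q^m)_{k-n^j}$-type factors, i.e.\ $1-aq^{mn^j\ell}$, become $1-q^{mn^j\ell}$ at $a=1$), so divisibility of the numerator by $(1-q^{n^r})^2$ gives no control of the \emph{reduced} difference modulo $\Phi_{n^j}(q)^2$ for $j<r$. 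You try to recover those factors from the inductive hypothesis, but the hypothesis (after $q\mapsto q^n$, $a\mapsto a^n$) controls $S_{r-1}(q^n,a^n)-S_{r-2}(q^{n^2},a^{n^2})$, not the difference $S_r(q,a)-(-1)^{\langle-s/m\rangle_n}S_{r-1}(q^n,a^n)$ that you need modulo $\prod_{j=1}^{r-1}\Phi_{n^j}(q)^2$, so the induction does not close. The paper avoids this entirely: its Theorem 2.2 establishes the parametric congruence modulo a product of $\lfloor(n^{r-1}-1)/s\rfloor+\lfloor(n^{r-1}-1)/(m-s)\rfloor+2$ linear factors $(1-aq^{sn(mj+1)})$ and $(a-q^{(m-s)n(mj+1)})$, whose limit at $a=1$ carries $\Phi_{n^j}(q)$ to exponent $\lfloor(n^{r-j}-1)/s\rfloor+\lfloor(n^{r-j}-1)/(m-s)\rfloor+2$ for every $j\le r$, enough to survive the reduction; no induction on $r$ is used.

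The second gap is the choice of specialisation points. At $a=q^{-n^r}$ a factor of $(aq^s;q^m)_k=(q^{s-n^r};q^m)_k$ vanishes only if $m\mid n^r-s$, which under $n\equiv1\pmod m$ forces $s=1$; symmetrically $a=q^{n^r}$ only helps when $s=m-1$. For general $s$ neither side terminates at $a=q^{\pm n^r}$, so the block decomposition you flag as ``the main technical obstacle'' has no vanishing pattern to exploit and the verification cannot be carried out. The arithmetic progression $s+m\mathbb{Z}$ dictates the correct points $a=q^{-sn(mj+1)}$ and $a=q^{(m-s)n(mj+1)}$ for $j$ in the stated ranges; at these points both sides become terminating sums that Guo's parametric lemma \cite[Corollary~1.4]{G2} evaluates in closed form as $(-1)^{sj+s(n-1)/m}$, so the two sides are seen to agree with no block collapse at all. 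In short, the skeleton is right but both the modulus and the specialisation set must be enlarged from a single pair to the full family indexed by $j$, and the evaluation should go through the known one-variable lemma rather than a transformation or block argument.
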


Clearly, the $q$-supercongruence \eqref{more4} is just the $(m,s)=(2,1)$ case of \eqref{DT1}.

Note that
\begin{equation*}
\Phi_d(1)=\begin{cases}p &\text{if }d=p^k\text{ for some prime }p,\\
1 &\text{otherwise}.
\end{cases}
\end{equation*}
Letting $n=p$ be a prime and $q\rightarrow 1$ in \eqref{DT1}, we obtain
\begin{align*}
\left(\sum_{k=0}^{p^r-1}\binom{-s/m}{k}\binom{-(m-s)/m}{k}-(-1)^{\langle-s/m\rangle_p}\sum_{k=0}^{p^{r-1}-1}\binom{-s/m}{k}\binom{-(m-s)/m}{k}\right)\bigg/p^{2r}
\end{align*}
is a $p$-adic integer.
Moreover, since $\gcd(m,p)=1$ and $1\bigg/\left(\binom{-s/m}{p^{r-1}}\binom{-(m-s)/m}{p^{r-1}}\right)\in \Z_{p}$, the number
$$W_{p,p^{r-1}}=\frac{\sum_{k=0}^{p^r-1}\binom{-s/m}{k}\binom{-(m-s)/m}{k}-(-1)^{\langle-s/m\rangle_p}\sum_{k=0}^{p^{r-1}-1}\binom{-s/m}{k}\binom{-(m-s)/m}{k}}{p^{2r}\binom{-s/m}{p^{r-1}}\binom{-(m-s)/m}{p^{r-1}}}$$
is a $p$-adic integer. This partially confirms the  $n=p^{r-1}$ case of a conjecture of Z.-W. Sun \cite[Conjecture 10]{ZWSun19}.
On the other hand, the $n=p$ and $q\to 1$ case of \eqref{DT1} with $m=3,4,6$ also confirms some predictions of Roberts and Rodriguez-Villegas from \cite{RR}.

The rest of the paper is arranged as follows. The proof of Theorem \ref{Th1} will be given in Section 2
using the creative microscoping method developed by Guo and Zudilin \cite{GZ1}.
More precisely, to prove Theorem \ref{Th1}, we shall prove its generalization with an extra parameter $a$ so that the corresponding congruence holds modulo $\prod_{j=0}^{n^{r-1}-1}(1-aq^{n(mj+s)})\prod_{j=0}^{n^{r-1}-1}(a-q^{n(mj+m-s)}).$ Since the polynomials $1-aq^{n(mj+s)}$ and $a-q^{n(mj+m-s)}$ are pairwise relatively prime for any $j$ with $0\leq j\leq n^{r-1}-1,$ this generalized $q$-congruence can be established modulo these polynomials individually. Finally, by
taking the limit $a\rightarrow1$, we obtain the desired $q$-supercongruence in Theorem \ref{Th1}.

\section{Proof of Theorem \ref{Th1}}
\setcounter{lemma}{0}
\setcounter{theorem}{0}
\setcounter{corollary}{0}
\setcounter{remark}{0}
\setcounter{equation}{0}
\setcounter{conjecture}{0}

We need the following lemma, which was proved by Guo \cite[Corollary 1.4]{G2}.
\begin{lemma}\label{Th1proofLemma1}
Let $m,n$ and $s$ be positive integers with $\gcd(m,n)=1$ and $n$ odd. Then, modulo $(1-aq^{s+m\langle-s/m\rangle_n})(a-q^{m-s+m\langle(s-m)/m\rangle_n}),$
\begin{align}\label{Th1proofDT1}
\sum_{k=0}^{n-1}\frac{2(aq^s;q^m)_k(q^{m-s}/a;q^m)_kq^{mk}}{(q^m;q^m)_k^2(1+q^{mk})}\equiv(-1)^{\langle-s/m\rangle_n}.
\end{align}
\end{lemma}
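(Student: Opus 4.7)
Set $t := \langle -s/m\rangle_n$ and $t' := \langle (s-m)/m\rangle_n$; the modulus in \eqref{Th1proofDT1} is the product of two linear polynomials in $a$ with unit leading coefficient, vanishing respectively at
\begin{equation*}
a_1 = q^{-s-mt},\qquad a_2 = q^{m-s+mt'}.
\end{equation*}
The defining congruences $s + mt \equiv 0 \pmod{n}$ and $s - m(t'+1)\equiv 0\pmod{n}$, combined with $\gcd(m,n)=1$, give $t + t' + 1 \equiv 0 \pmod n$; the bounds $0 \le t, t' \le n-1$ then force $t + t' = n-1$. In particular $a_1 \ne a_2$, so the two factors are coprime in $\mathbb{Z}[q^{\pm 1}][a]$, and by a $q$-analogue of the Chinese Remainder Theorem it suffices to verify \eqref{Th1proofDT1} after specializing $a = a_1$ and after specializing $a = a_2$.

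Substituting $a = a_1$ makes $(aq^s;q^m)_k = (q^{-mt};q^m)_k$, which vanishes for $k > t$, so the sum truncates; at the same time $(q^{m-s}/a;q^m)_k = (q^{m(t+1)};q^m)_k$. The congruence therefore reduces to the closed-form evaluation
\begin{equation*}
S(T)\;:=\;\sum_{k=0}^{T}\frac{2(Q^{-T};Q)_k (Q^{T+1};Q)_k\, Q^k}{(Q;Q)_k^2\,(1+Q^k)}\;=\;(-1)^T,\qquad Q:=q^m,
\end{equation*}
applied at $T=t$. The specialization $a=a_2$ swaps the roles of the two $q$-shifted factorials and yields the same identity with $T=t'$. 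Since $t + t' = n - 1$ is even (as $n$ is odd), $(-1)^t = (-1)^{t'}$, so both specializations agree with the right-hand side $(-1)^{\langle -s/m\rangle_n}$ of \eqref{Th1proofDT1}.

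It remains to establish $S(T) = (-1)^T$. The classical limit $Q\to 1$ is the known evaluation $\sum_{k=0}^T (-1)^k \binom{T}{k}\binom{T+k}{k} = (-1)^T$, i.e., the value $P_T(-1)$ of the Legendre polynomial, so $S(T)=(-1)^T$ is the natural $q$-analogue. My plan is (i) to try to recognize $S(T)$ as a terminating very-well-poised ${}_6\phi_5$ after a base change (e.g.\ $q \mapsto q^{1/2}$), the factor $2Q^k/(1+Q^k)$ being characteristic of well-poised telescoping, and close it via Jackson's summation; or (ii) failing a clean match, to produce an explicit Gosper/Zeilberger creative-telescoping certificate witnessing the first-order recurrence $S(T+1) + S(T) = 0$ which, combined with $S(0)=1$, yields the identity.

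The main obstacle is precisely this final evaluation: the denominator $1 + Q^k$ is non-standard for a direct $q$-Chu--Vandermonde or $q$-Pfaff--Saalschütz application, so the real work lies in identifying the correct well-poised presentation or constructing the telescoping certificate. Once $S(T) = (-1)^T$ is in hand, the $a_1/a_2$ reduction outlined above is routine creative microscoping.
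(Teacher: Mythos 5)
Your reduction is sound and is exactly the standard ``creative microscoping'' evaluation-at-roots argument: with $t=\langle -s/m\rangle_n$ and $t'=\langle (s-m)/m\rangle_n$ one indeed gets $t+t'=n-1$, the two specializations truncate the sum at $k=t$ and $k=t'$ respectively, and $(-1)^t=(-1)^{t'}$ because $n$ is odd. Note, for calibration, that the paper does not prove this lemma at all --- it is quoted verbatim from Guo \cite[Corollary~1.4]{G2} --- so there is no in-paper argument to compare against; your write-up is an attempt at reconstructing Guo's proof.

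The genuine gap is that the one step carrying all the content, the terminating evaluation $S(T)=(-1)^T$, is not proved: you offer two candidate strategies (a well-poised ${}_6\phi_5$ recognition, or a Zeilberger certificate for $S(T+1)+S(T)=0$) and carry out neither, and your stated obstacle --- that the factor $1+Q^k$ blocks a direct $q$-Pfaff--Saalsch\"utz application --- is in fact not an obstacle. Writing $\frac{2}{1+Q^k}=\frac{(-1;Q)_k}{(-Q;Q)_k}$ turns the sum into the balanced series
\begin{equation*}
S(T)=\sum_{k=0}^{T}\frac{(Q^{-T};Q)_k\,(Q^{T+1};Q)_k\,(-1;Q)_k}{(Q;Q)_k\,(Q;Q)_k\,(-Q;Q)_k}\,Q^k,
\end{equation*}
which is Saalsch\"utzian since $Q\cdot(-Q)=Q^{-T}\cdot Q^{T+1}\cdot(-1)\cdot Q$; the $q$-Pfaff--Saalsch\"utz summation then gives
$S(T)=\dfrac{(Q^{-T};Q)_T\,(-Q;Q)_T}{(Q;Q)_T\,(-Q^{-T};Q)_T}=(-1)^T$,
using $(Q^{-T};Q)_T=(-1)^TQ^{-T(T+1)/2}(Q;Q)_T$ and $(-Q^{-T};Q)_T=Q^{-T(T+1)/2}(-Q;Q)_T$. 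With that supplied, your argument is complete; without it, the proof is a reduction of the lemma to an unproven identity.
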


In order to prove Theorem \ref{Th1}, we need to establish the following two parametric
generalizations.
\begin{theorem}\label{Th1proof}
Let $m$, $n$ and $s$ be positive integers with $s < m$, $n \equiv 1\pmod m$ and $n$ odd. Let $r \geq 2$ be an integer and $a$ an indeterminate. Then, modulo
\begin{align}\label{divide1}
\prod_{j=0}^{n^{r-1}-1}(1-aq^{n(mj+s)})(a-q^{n(mj+m-s)}),
\end{align}
we have
\begin{align}\label{DT3}
&\sum_{k=0}^{n^r-1}\frac{2(aq^s;q^m)_k(q^{m-s}/a;q^m)_kq^{mk}}{(q^m;q^m)_k^2(1+q^{mk})} \notag\\[5pt]
&\quad\equiv(-1)^{\langle-s/m\rangle_n}\sum_{k=0}^{n^{r-1}-1}\frac{2(aq^{sn};q^{mn})_k(q^{mn-sn}/a;q^{mn})_kq^{mnk}}{(q^{mn};q^{mn})_k^2(1+q^{mnk})}.
\end{align}
\end{theorem}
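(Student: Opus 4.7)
The plan is to follow the creative microscoping strategy of Guo and Zudilin by treating \eqref{DT3} as a rational-function identity in the auxiliary parameter $a$. Both sides have denominators that are pure $q$-shifted factorials in $q$ alone (not involving $a$), hence coprime to the $a$-polynomial in \eqref{divide1}. The roots of \eqref{divide1} are the pairwise distinct values $a = q^{-sn(mj+1)}$ and $a = q^{(m-s)n(mj+1)}$ (pairwise distinct because their $q$-exponents are distinct integers, the first family being negative and the second positive), so by a Chinese remainder argument it suffices to verify \eqref{DT3} at each of these values of $a$ separately.

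For the first family, fix $j$ and specialize $a = q^{-sn(mj+1)}$. At this value, the factor $(aq^{sn};q^{mn})_k$ on the right-hand side has a zero at its $(sj{+}1)$-st factor, truncating the right-hand sum to $0 \le k \le sj$. On the left-hand side, I would split the summation by writing $k = \ell n + i$ with $0 \le \ell \le n^{r-1}-1$ and $0 \le i \le n-1$, and use the product identities $(aq^s;q^m)_{\ell n + i} = (aq^s;q^m)_{\ell n}(aq^{s + m\ell n};q^m)_i$ (and analogues for the other $q$-shifted factorials in the summand) to pull an $\ell$-dependent prefactor outside the inner sum over $i$. The inner sum is then exactly the sum treated by Lemma~\ref{Th1proofLemma1} with $a$ replaced by $aq^{m\ell n}$, which evaluates to $(-1)^{\langle -s/m\rangle_n}$ modulo $(1 - aq^{m\ell n + s + m\langle -s/m\rangle_n})(aq^{m\ell n} - q^{m - s + m\langle (s-m)/m\rangle_n})$. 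Using $n\equiv 1\pmod m$ one computes $\langle -s/m\rangle_n = s(n-1)/m$ and $\langle (s-m)/m\rangle_n = (m-s)(n-1)/m$, so this modulus simplifies to $(1 - aq^{n(m\ell + s)})(aq^{m\ell n} - q^{(m-s)n})$, whose first factor vanishes at our specialization precisely when $\ell = sj$.

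For $\ell \ne sj$, the $\ell$-block contributes the pulled-out prefactor times $(-1)^{\langle -s/m\rangle_n}$, and a direct comparison shows that this prefactor, evaluated at $a = q^{-sn(mj+1)}$, coincides with the $\ell$-th summand of the right-hand side of \eqref{DT3}. At the critical index $\ell = sj$, Lemma~\ref{Th1proofLemma1} fails to apply directly; here one argues that the apparent singularity in the block is cancelled by a simultaneous zero of the prefactor, and a residue-type computation recovers precisely the boundary term of the truncated right-hand sum. Finally, the second family of roots $a = q^{(m-s)n(mj+1)}$ is disposed of by the symmetry $a \mapsto q^{m-s}/a$, which preserves each summand of \eqref{DT3} and swaps the two product families in \eqref{divide1}.

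The main obstacle will be the careful bookkeeping at the critical block $\ell = sj$, where Lemma~\ref{Th1proofLemma1} degenerates and one must extract exactly the right boundary contribution so that the truncated sums on the two sides agree. The hypothesis $n \equiv 1 \pmod m$ is essential throughout: it guarantees integrality of $\langle -s/m\rangle_n$ and $\langle (s-m)/m\rangle_n$ and, more importantly, ensures that the shifted roots of the Lemma~\ref{Th1proofLemma1} modulus (taken as $\ell$ varies) align exactly with the factors appearing in \eqref{divide1}, which is precisely what makes the root-by-root verification strategy close up.
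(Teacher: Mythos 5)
Your opening move---reducing \eqref{DT3} to an exact identity at each of the pairwise distinct roots $a=q^{-sn(mj+1)}$ and $a=q^{(m-s)n(mj+1)}$ of \eqref{divide1}---is exactly the paper's first step. But your evaluation at those roots does not go through. After writing $k=\ell n+i$, the denominator splits as $(q^m;q^m)_{\ell n+i}^2=(q^m;q^m)_{\ell n}^2\,(q^{m(\ell n+1)};q^m)_i^2$ and the factor $1+q^{mk}$ becomes $1+q^{m\ell n+mi}$, so the inner sum over $i$ has the shape $\sum_i \frac{2(aq^{s+m\ell n};q^m)_i(q^{m-s+m\ell n}/a;q^m)_i q^{mi}}{(q^{m(\ell n+1)};q^m)_i^2(1+q^{m\ell n+mi})}$. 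This is \emph{not} Lemma~\ref{Th1proofLemma1} with $a$ replaced by $aq^{m\ell n}$: that substitution would send $(q^{m-s}/a;q^m)_i$ to $(q^{m-s-m\ell n}/a;q^m)_i$ rather than $(q^{m-s+m\ell n}/a;q^m)_i$, and it cannot produce the shifted denominator or the factor $1+q^{m\ell n+mi}$ at all. (Such block decompositions work modulo $\Phi_n(q)$, where $q^{mn}\equiv 1$ removes the shifts, but here you need exact identities at the specialized $a$.) The whole weight of the argument then falls on the unspecified ``residue-type computation'' at the critical block, which is precisely the part that is not supplied. Your symmetry for the second family is also incorrect: the involution preserving the summand is $a\mapsto q^{m-2s}/a$, not $a\mapsto q^{m-s}/a$, and even the correct one sends $q^{-sn(mj+1)}$ to $q^{m-2s+sn(mj+1)}$, which is not of the form $q^{(m-s)n(mj'+1)}$, so the two root families are not exchanged.

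The idea you are missing is that Lemma~\ref{Th1proofLemma1} applies to the \emph{entire} sums, with its modulus parameter chosen differently on each side. At $a=q^{-sn(mj+1)}$ the factor $(aq^s;q^m)_k$ vanishes for $k>snj+s(n-1)/m=\langle -s/m\rangle_{n(mj+1)}$, and since $n^r-1\geq snj+s(n-1)/m$ the left-hand side equals $\sum_{k=0}^{n(mj+1)-1}$ of the same summand; Lemma~\ref{Th1proofLemma1} with $n$ replaced by $n(mj+1)$ (our $a$ is a root of the first factor of its modulus) evaluates this as $(-1)^{sj+s(n-1)/m}$. On the right, the sum truncates at $k=sj\leq n^{r-1}-1$, and the lemma with $q\to q^n$ and $n\to mj+1$ gives $(-1)^{s(n-1)/m}(-1)^{sj}$. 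The second family of roots is treated identically via the second factor of the lemma's modulus. No splitting of the summation range and no boundary analysis is needed.
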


\begin{proof} It suffices to show that both sides of \eqref{DT3} are identical when we take $a=q^{-n(mj+s)}$  for any $j$ with $0\leq j\leq n^{r-1}-1$, i.e.,
\begin{align}\label{identity1}
&\sum_{k=0}^{n^r-1}\frac{2(q^{s-n(mj+s)};q^m)_k(q^{m-s+n(mj+s)};q^m)_kq^{mk}}{(q^m;q^m)_k^2(1+q^{mk})}\notag\\
&\quad=(-1)^{\langle-s/m\rangle_n}\sum_{k=0}^{n^{r-1}-1}\frac{2(q^{-nmj};q^{mn})_k(q^{mn+nmj};q^{mn})_kq^{mnk}}{(q^{mn};q^{mn})_k^2(1+q^{mnk})},
\end{align}
or $a=q^{n(mj+m-s)}$ for any $j$ with $0\leq j\leq n^{r-1}-1$, i.e.,
\begin{align}\label{identity2}
&\sum_{k=0}^{n^r-1}\frac{2(q^{s+n(mj+m-s)};q^m)_k(q^{m-s-n(mj+m-s)};q^m)_kq^{mk}}{(q^m;q^m)_k^2(1+q^{mk})}\notag\\
&\quad=(-1)^{\langle-s/m\rangle_n}\sum_{k=0}^{n^{r-1}-1}\frac{2(q^{mjn+mn};q^{mn})_k(q^{-mjn};q^{mn})_kq^{mnk}}{(q^{mn};q^{mn})_k^2(1+q^{mnk})}.
\end{align}
It is easy to see that $n^{r}-1\geq nj+s(n-1)/m$ for $0\leq j\leq n^{r-1}-1$, and $n^{r}-1\geq nj+(m-s)(n-1)/m$ for $0\leq j\leq n^{r-1}-1$.
Since $n \equiv 1\pmod m$, we know $\langle-s/m\rangle_n=s(n-1)/m$ and $\langle (s-m)/m\rangle_n=(m-s)(n-1)/m$. By Lemma \ref{Th1proofLemma1} the left-hand side of \eqref{DT3} is equal to
$$(-1)^{nj+s(n-1)/m}.$$ Likewise, the right-hand side of \eqref{DT3} is equal to $$(-1)^{s(n-1)/m}(-1)^{(mj+s-s)/m}=(-1)^{nj+s(n-1)/m}.$$ This proves (\ref{identity1}).
Similarly, we can also prove the identity \eqref{identity2} is true. Namely, the $q$-congruence \eqref{DT3} is true modulo \eqref{divide1}.
\end{proof}
\begin{theorem}\label{Th2proof}
Let $m$, $n$ and $s$ be positive integers with $s < m$, $n \equiv -1\pmod m$ and $n$ odd. Then, for $r \geq 2$, modulo
\eqref{divide1}
we have
\begin{align}\label{DT4}
&\sum_{k=0}^{n^r-1}\frac{2(q^s/a;q^m)_k(aq^{m-s};q^m)_kq^{mk}}{(q^m;q^m)_k^2(1+q^{mk})} \notag\\[5pt]
&\quad\equiv(-1)^{\langle-s/m\rangle_n}\sum_{k=0}^{n^{r-1}-1}\frac{2(aq^{sn};q^{mn})_k(q^{mn-sn}/a;q^{mn})_kq^{mnk}}{(q^{mn};q^{mn})_k^2(1+q^{mnk})}.
\end{align}
\end{theorem}
\begin{proof} For $a=q^{-n(mj+s)}$ with $0\leq j\leq n^{r-1}-1$, by Lemma \ref{Th1proofLemma1}, the left-hand side of (\ref{DT4}) is equal to
\begin{align*}
&\sum_{k=0}^{n^r-1}\frac{2(q^{s+n(mj+s)};q^m)_k(q^{m-s-n(mj+s)};q^m)_kq^{mk}}{(q^m;q^m)_k^2(1+q^{mk})}\notag\\
&\quad=(-1)^{nj+s(n+1)/m-1}=(-1)^{j-1+s(n+1)/m},
\end{align*}
Similarly, the right-hand side of \eqref{DT4} is equal to
$$ (-1)^{\langle-s/m\rangle_{n}}(-1)^j=(-1)^{n-(n+1)s/m+j}=(-1)^{j-1+s(n+1)/m},$$
where we use the fact that $\langle-s/m\rangle_n=n-s(n+1)/m$ since $n\equiv-1\pmod{m}.$ And so the $q$-congruence \eqref{DT4} is true modulo $\prod_{j=0}^{n^{r-1}-1}(1-aq^{n(mj+s)})$.

For $a=q^{n(mj+m-s)}$ with $0\leq j\leq n^{r-1}-1$,
the left-hand side of \eqref{DT4} is equal to
\begin{align*}
&\sum_{k=0}^{n^r-1}\frac{2(q^{s-n(mj+m-s)};q^m)_k(q^{(m-s)+n(mj+m-s)};q^m)_kq^{mk}}{(q^m;q^m)_k^2(1+q^{mk})}\notag\\
&\quad=(-1)^{nj+n-s(n+1)/m}=(-1)^{j-1+s(n+1)/m},
\end{align*}
which is the same as the right-hand side of \eqref{DT4}. This proves \eqref{DT4}  modulo $\prod_{j=0}^{n^{r-1}-1}(a-q^{n(mj+m-s)})$.
\end{proof}
\begin{proof}[Proof of  Theorem \ref{Th1}] The limit of \eqref{divide1} as $a\rightarrow1$ has the factor $$\prod_{j=1}^r\Phi_{n^j}(q)^{2n^{r-j}},$$
where we use the fact that the sets
\begin{align*}
&\begin{cases}\{n(mj+s):j=0,\ldots, n^{r-1}-1 \},&\text{ }\\
\{n(mj+m-s):j=0,\ldots, n^{r-1}-1 \},\\
\end{cases}
\end{align*}
in total contain exactly $2n^{r-j}$ multiples of $n^j$ for $j=1,\ldots, r.$

On the other hand, the least common denominator of both sides of \eqref{DT3} is at most equal to
$
(q^m;q^m)^2_{n^r-1}\prod_{k=1}^{n^r-1}(1+q^{mk})
$
and its factor related to $\Phi_{n}(q),\Phi_{n^2}(q),\ldots$ is just
$$\prod_{j=1}^r\Phi_{n^j}(q)^{2\left\lfloor (n^{r}-1)/n^j\right\rfloor}=\prod_{j=1}^r\Phi_{n^j}(q)^{2(n^{r-j}-1)}.$$

Hence, letting $a\rightarrow 1$ in \eqref{DT3} we conclude that the $n\equiv1\pmod{m}$ case of \eqref{DT1} is true modulo $\prod_{j=1}^{r}\Phi_{n^j}(q)^2.$

Similarly, letting $a\rightarrow1$ in \eqref{DT4}, we deduce
that  the $n\equiv-1\pmod{m}$ case of \eqref{DT1} is also true. Namely, the $q$-congruence \eqref{DT1} is true modulo $\prod_{j=1}^r \Phi_{n^j}(q)^2$.
\end{proof}

\begin{acknowledgment}
The author thanks  V.J.W. Guo and C. Wang for their very helpful discussions on this paper.
\end{acknowledgment}


\begin{thebibliography}{99}
\bibitem{BD} B. Dwork, {\it $p$-adic cycles}, Publ. Math. Inst. Hautes \'Etudes Sci. {\bf 37} (1969), 27--115.
\bibitem{GR} G. Gasper, M. Rahman, {\it Basic hypergeometric series}, Second Edition, Encyclopedia of Mathematics and Its Applications, Vol. 96, Cambridge University Press, 2004.

\bibitem{G2} V.J.W. Guo, {\it Some $q$-congruences with parameters}, Acta Arith. {\bf 190} (2019), 381--393.

\bibitem{G3} V.J.W. Guo, {\it A $q$-analogue of the (A.2) supercongruence of Van Hamme for primes $p\equiv 1\pmod 4$},
          Rev. R. Acad. Cienc. Exactas F\'is. Nat., Ser. A Mat. {\bf 114} (2020), Art.~123.

\bibitem{G1} V.J.W. Guo, {\it $q$-Analogues of Dwork-type supercongruences}, J. Math. Anal. Appl. {\bf 487} (2020), Art. 124022.

\bibitem{GS}V.J.W. Guo and M.J. Schlosser, {\it A new family of $q$-supercongruences modulo the fourth power of a cyclotomic polynomial}, Results Math. {\bf 75} (2020), Art. 155.

\bibitem{GZ14} V.J.W. Guo and J. Zeng, {\it Some $q$-analogues of supercongruences of Rodriguez-Villegas}, J. Number Theory {\bf 145} (2014), 301--316.

\bibitem{GZ1} V.J.W. Guo and W. Zudilin, {\it A $q$-microscope for supercongruences}, Adv. Math. {\bf 346} (2019),
329--358.

\bibitem{GZ2} V.J.W. Guo and W. Zudilin, {\it Dwork-type supercongruences through a creative q-microscope}, J. Combin. Theory, Ser. A {\bf 178} (2021), Art. 105362.

\bibitem{LW}L. Li and S.-D. Wang, {\it Proof of a $q$-supercongruence conjectured by Guo and Schlosser},
Rev. R. Acad. Cienc. Exactas F\'is. Nat., Ser. A Mat. {\bf 114} (2020), Art. 190.

\bibitem{JCL} J.-C. Liu, {\it Congruences for truncated hypergeometric series ${}\sb 2F\sb 1$}, Bull. Aust. Math. Soc. {\bf 96} (2017), 14--23.
\bibitem{MV} A. Mellit and M. Vlasenko, {\it Dwork's congruences for the constant terms of powers of a Laurent polynomial}, Int. J. Number Theory {\bf 12} (2016), 313--321.

\bibitem {M03} E. Mortenson, {\it A supercongruence conjecture of Rodriguez-Villegas for a certain truncated
hypergeometric function}, J. Number Theory {\bf 199} (2003), 139--147.

\bibitem {M04} E. Mortenson, {\it Supercongruences between truncated ${}\sb 2F\sb 1$ hypergeometric functions and their Gaussian analogs},
Trans. Amer. Math. Soc. {\bf 1355} (2003), 987--1007.

\bibitem{NP2} H.-X. Ni and H. Pan, {\it Divisibility of some binomial sums}, Acta Arith. {\bf 194} (2020), 367--381.

\bibitem{RR} D.P. Roberts and F. Rodriguez-Villegas, {\it Hypergeometric supercongruences}, in: 2017 MATRIX Annals, MATRIX Book Ser. 2 (Springer, Cham, 2019), 435--439.


\bibitem{RV} F. Rodriguez-Villegas, {\it Hypergeometric families of Calabi-Yau manifolds}, in: Calabi-Yau
Varieties and Mirror Symmetry (Toronto, ON, 2001), pp. 223--231, Fields Inst. Commun.,
38, Amer. Math. Soc., Providence, RI, 2003.



\bibitem{ZHSun14} Z.-H. Sun, {\it Generalized Legendre polynomials and related supercongruences}, J. Number Theory {\bf 143} (2014), 293--319.

\bibitem{ZWSun19} Z.-W. Sun, {\it Open conjectures on congruences}, Nanjing Univ. J. Math. Biquarterly {\bf 36} (2019), no. 1, 1--99.
\bibitem{WY2}  X. Wang and M. Yue, {\it Some q-supercongruences from Watson's ${}_8\phi_7$ transformation formula},
Results Math. {\bf 75} (2020), Art. 71.
\bibitem{WY1} X. Wang and M. Yue, {\it A $q$-analogue of a Dwork-type supercongruence}, Bull. Aust. Math. Soc. {\bf 103} (2021), no. 2, 303--310.

\bibitem{WY3} Y. Liu and X. Wang, {\it $q$-Analogues of two Ramanujan-type supercongruences}, J. Math. Anal. Appl. {\bf 502} (2021), Art. 125238.

\bibitem{Zudilin2}W. Zudilin, {\it Congruences for $q$-binomial coefficients}, Ann. Combin. {\bf 23} (2019), 1123--1135.

\end{thebibliography}
\end{document}